\documentclass[a4paper,12pt]{article}

\usepackage{graphicx}%
\usepackage{multirow}%
\usepackage{amsmath,amssymb,amsfonts}%
\usepackage{amsthm}%
\usepackage{mathrsfs}%
\usepackage[title]{appendix}%
\usepackage{xcolor}%
\usepackage{textcomp}%
\usepackage{manyfoot}%
\usepackage{booktabs}%
\usepackage{algorithm}%
\usepackage{algorithmicx}%
\usepackage{algpseudocode}%
\usepackage{listings}%

\usepackage{tikz}
\usetikzlibrary{arrows,decorations.markings}
\usetikzlibrary{arrows.meta}

\newtheorem{theorem}{Theorem}[section]
\newtheorem{lemma}[theorem]{Lemma}
\newtheorem{cor}{Corollary}[theorem]

\begin{document}
\title{On the minimum number of entries in a pair of maximal orthogonal partial Latin squares}

\author{Diane M. Donovan\footnote{Donovan acknowledges the support of the Australian Government through funding of the Australian Research Council  Centre of Excellence for Plant Success in Nature \& Agriculture (project number CE200100015).},\\
\normalsize{ARC Centre of Excellence for Plant Success in Nature and Agriculture,}\\
\normalsize{School of Mathematics and Physics, University of Queensland,}\\
\normalsize{Brisbane 4072, Australia.} \\
\texttt{(dmd@maths.uq.edu.au)}\\
\\
Mike Grannell,\\
\normalsize{School of Mathematics and Statistics, The Open University,} \\
\normalsize{Walton Hall, Milton Keynes MK7 6AA, United Kingdom.}\\
\texttt{(m.j.grannell@open.ac.uk)}\\
\\
Emine \c{S}ule Yaz{\i}c{\i}\footnote{Yaz{\i}c{\i} acknowledges the support of the Turkish Government through funding by The Scientific and Technological Research Council of Turkey (TUBITAK grant number: 121F111)},\\
\normalsize{Department of Mathematics, Ko\c{c} University,}\\
\normalsize{Sar{\i}yer, 34450, \.{I}stanbul, Turkey.}\\
\normalsize{Also}\\
\normalsize{School of Mathematics and Physics, University of Queensland,}\\
\normalsize{Brisbane 4072, Australia.} \\
\texttt{(eyazici@ku.edu.tr)}\\
}

\maketitle

\newpage

\noindent\textbf{Keywords:~} Latin Squares; Partial Latin Squares; Orthogonal Latin \linebreak Squares; Orthogonal Partial Latin Squares; Transversal Designs; Balanced $K_k$-free Multipartite Graphs; Codes; Covering Radius.\\

\noindent\textbf{MSC Classifications:~}05B15, 94B65\\

\begin{abstract}
It is shown that if $F$ denotes the number of filled cells in a superimposed pair of maximal orthogonal partial Latin squares of order $n$, then $F\ge n^2/3$. This resolves a conjecture raised in an earlier paper by the current authors. It is also shown that, for $n\ge 21$, the least possible number of filled cells in a pair of maximal orthogonal partial Latin squares is $\lceil n^2/3 \rceil$, and that the structure that achieves this bound is unique up to permutations of rows, columns and entries.
\end{abstract}

\section{Introduction} \label{sec:intro}
This paper follows on from an earlier paper \cite{MOPLS1} by the current authors and resolves one of the conjectures raised in that paper.

We start by recalling some basic definitions. A \emph{Latin square} of order $n$, denoted by LS($n$), is an $n\times n$ array $L=[L(i,j)]$, with entries from an $n$-element set, such that each element of this set occurs once in every row and once in every column of $L$. If convenient, rows and columns can be indexed by $[n]=\{1,2,\ldots,n\}$, and the set of entries can also be taken as $[n]$. An alternative representation of such a Latin square is as a \emph{transversal design}, TD$(3,n)$, which comprises a set of $3n$ points partitioned into three $n$-element subsets, called groups, and a set of $n^2$ triples such that each pair of points from different groups appears in precisely one triple and no triple contains more than one point from each group. The correspondence between these two representations is by taking the three groups to represent respectively rows, columns and entries, and a triple $(a,b,c)$ to represent entry $c$ in the cell corresponding to the intersection of row $a$ and column $b$. An LS($n$) can also be viewed as a partition of the complete tripartite graph $K_{n,n,n}$ into copies of $K_3$. The three parts of the graph represent the rows, columns and entries and the copies of $K_3$ form the (row, column, entry) triples. An advantage of thinking in these terms is that the three vertex parts clearly have equal status.

A \emph{partial} Latin square of order $n$, denoted by PLS($n$), is defined in the same manner as a Latin square of order $n$ except that some of the cells may be empty, in other words, each element of the entry set occurs \emph{at most} once in every row and \emph{at most} once in every column. Alternative representations are as a partial transversal design PTD($3,n$) or as a decomposition of a subgraph of $K_{n,n,n}$ into copies of $K_3$.

A \emph{maximal} PLS($n$) is a partial Latin square of order $n$ that cannot be extended to another PLS($n$) by inserting any entry into any empty cell. A maximal PLS($n$) is denoted by MPLS($n$). Horak and Rosa \cite{HorakRosa} proved that a PSL($n$) with less than $n^2/2$ filled cells cannot be maximal. On the other hand, for each positive integer $n$ there is an MPLS($n$) with the number of filled cells equal to $\lceil n^2/2 \rceil$. Figure \ref{fig:MPLS6and7} shows squares of orders $n=6$ and $n=7$, and it is easy to see how these generalise.
\begin{figure}[ht]
\[ \begin{array}{|rrrrrr|}\hline ~1&~2&~3&-&-&-\\3&1&2&-&-&-\\2&3&1&-&-&-\\-&-&-&~4&~5&~6\\
-&-&-&6&4&5\\-&-&-&5&6&4\\ \hline \end{array}\hspace{2cm}
\begin{array}{|rrrrrrr|}\hline ~1&~2&~3&-&-&-&-\\3&1&2&-&-&-&-\\2&3&1&-&-&-&-\\-&-&-&~4&~5&~6&~7\\
-&-&-&7&4&5&6\\ -&-&-&6&7&4&5\\ -&-&-&5&6&7&4\\\hline \end{array}\]
\caption{MPLS(6) and MPLS(7)} \label{fig:MPLS6and7}
\end{figure}

By permuting rows and columns, any MPLS($n$) with $n>1$ and having $\lceil n^2/2 \rceil$ filled cells can be put into a form similar to those shown in Figure \ref{fig:MPLS6and7}. We give a short  proof of this because it will help in our subsequent discussions.

\begin{lemma} \label{lem:HR} Suppose that $n>1$ and that $M$ is an MPLS($n$) of minimum size, so that $M$ has $F=\lceil n^2/2 \rceil$ filled cells. Then $M$ can be partitioned into two Latin squares, on different point sets, with orders that sum to $n$ and are either $\lfloor n/2 \rfloor$ or $\lfloor n/2 \rfloor+1$.
\end{lemma}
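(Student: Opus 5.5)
We will prove the Horak--Rosa bound along the way by a counting argument, and then read the structure off the cases where every inequality is tight.

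\textbf{Counting.} Choose an empty cell $(r,c)$, and let row $r$ contain $a$ filled cells and column $c$ contain $b$ filled cells. By maximality, each of the $n$ symbols already occurs in row $r$ or in column $c$. Hence $a+b\ge n$, and exactly $a+b-n$ symbols occur in both. Now fix a row $r$ with the minimum number $a$ of filled cells. If $a=n$ the square is full, which is not minimal for $n>1$, so $a<n$. Each of the $n-a$ empty cells of row $r$ lies in a column with at least $n-a$ filled cells. Each of the other $a$ columns contains at least $a$ filled cells, since by the row--column symmetry we may also assume $a$ is at most the column minimum. (The cleaner route is to apply the argument to the smaller of the row minimum and the column minimum.) This gives
\[ F\ \ge\ (n-a)^2+a^2\ \ge\ n^2/2 .\]

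\textbf{Equality conditions.} At the minimum $F=\lceil n^2/2\rceil$, the inequality $(n-a)^2+a^2\le\lceil n^2/2\rceil$ forces $a\in\{\lfloor n/2\rfloor,\lceil n/2\rceil\}$. Tightness then forces the following. Let $C_1$ be the set of $a$ columns meeting row $r$ in filled cells. Every column outside $C_1$ has exactly $n-a$ filled cells, and every column in $C_1$ has exactly $a$ filled cells. (In the odd case the slack is at most $1/2$ rounded, and integrality makes these counts exact.) Moreover, for each empty cell $(r,c)$ we have $a+(n-a)=n$, so the symbols in column $c$ are exactly the complement $S_2$ of the set $S_1$ of symbols in row $r$. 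We now run the same argument from each column $c\notin C_1$ instead of from row $r$.

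\textbf{Block structure.} This yields a partition of the rows into $R_1$ (rows like $r$) and $R_2$. The goal is to show that the cells in $R_1\times C_1$ are all filled with symbols from $S_1$, that the cells in $R_2\times C_2$ are all filled with symbols from $S_2$, and that all other cells are empty. Each block is then an $a\times a$ (respectively $(n-a)\times(n-a)$) array in which every row and column uses all of its own symbol set, i.e.\ a Latin square.

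The obstacle is this step: converting the tight counts into the clean block picture. My plan is as follows. Define $R_2$ to be the set of rows having filled cells in a column $c\notin C_1$. Each column $c\notin C_1$ has exactly the $n-a$ symbols of $S_2$, and none of them lies in row $r$. Then show that such a row meets $C_1$ only in empty cells, since otherwise it would contain too many symbols. For this we use the fact that each row has at least $a$ entries, combined with the column-sum equalities $|C_1|a+(n-|C_1|)(n-a)=F$, to force every row to have exactly $a$ or $n-a$ entries and to lie entirely inside one block. Some care is needed when $a=n-a$, i.e.\ when $n$ is even: here the two counts coincide, so the sizes alone do not distinguish the blocks, and we must instead use the disjointness of $S_1$ and $S_2$. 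Once the blocks are established, the Latin property within each block follows immediately from the equalities above.
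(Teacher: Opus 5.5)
\textbf{There is a genuine gap in the block-structure step.} Your counting part is sound. With $a$ the minimum line frequency you correctly get $F\ge a^2+(n-a)^2$, exact column counts ($a$ on each column of $C_1$, $n-a$ on each column of $C_2$), and that every column of $C_2$ carries exactly the symbols $S_2$.

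But the block-structure step is the real content of the lemma. You have not proved it, and the route you sketch would not prove it:
\begin{itemize}
\item The identity $|C_1|a+(n-|C_1|)(n-a)=F$ is just the column count you already used. It says nothing about how entries are distributed among rows, so it cannot force every row to have $a$ or $n-a$ entries. For $n=2s+1$, $a=s$, the surplus $F-ns=s+1$ could a priori all lie in one row.
\item ``Running the same argument from each column $c\notin C_1$'' gives nothing tight when $n$ is odd. Those columns have $a+1$ entries, not the minimum.
\item For even $n$, disjointness of $S_1$ and $S_2$ does not by itself stop a symbol of $S_2$ from occurring in a column of $C_1$. That is precisely what has to be shown.
\end{itemize}

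\textbf{The missing ingredient is the third coordinate.} Take $a$ to be the minimum frequency over rows, columns \emph{and symbols}, using the full symmetry between rows, columns and entries as the paper does. Then:
\begin{itemize}
\item Each symbol of $S_2$ already occurs once in each of the $n-a$ columns of $C_2$, and each symbol of $S_1$ occurs at least $a$ times. Since $F=a^2+(n-a)^2$, the symbols of $S_2$ occur only in $C_2$. So each column of $C_1$, having exactly $a$ entries, contains exactly the symbols $S_1$.
\item If row $i$ has an empty cell in some column of $C_1$, maximality forces row $i$ to contain all of $S_2$, hence to be full on $C_2$. Otherwise row $i$ is full on $C_1$.
\item $C_1$ holds only $a^2$ entries and $C_2$ only $(n-a)^2$. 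So at most $a$ rows are full on $C_1$ and at most $n-a$ are full on $C_2$. Hence both counts are exact, the off-diagonal blocks are empty, and both diagonal blocks are Latin squares.
\end{itemize}

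This is the paper's argument in conjugate form. The paper picks a minimum-frequency symbol $y_1$ and notes that the $(n-m)\times(n-m)$ block of rows and columns avoiding $y_1$ must be full by maximality. Counting row (and, symmetrically, column) frequencies then gives both $F\ge m^2+(n-m)^2$ and the vanishing of the off-diagonal blocks.

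A minor point: for odd $n$ you should also rule out $a=\lceil n/2\rceil$ via $a\le F/n<\lfloor n/2\rfloor+1$, though this is cosmetic.
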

\begin{proof}
If we put $s=\lfloor n/2 \rfloor$, then $n=2s$ or $2s+1$, so that $M$ has $F=2s^2$ or $2s^2+2s+1$ filled cells, respectively. Let $m$ be the minimum frequency of $M$. By this we mean that $m$ is the minimum of the number of entries in any row or any column and the number of occurrences of any entry. Then $m$ is at most the average frequency, namely $F/n$, which gives $m\le s$ in both cases. By the symmetry between rows, columns and entries, we may therefore assume that there is some entry, say $y_1$, that appears in $M$ exactly $m$ times, and $M$ may then be rearranged into the form shown in Figure \ref{fig:HR}, where $M$ is subdivided onto four regions labelled $M_1,M_2,M_3$ and $M_4$. The remaining entries may be taken as $y_2,y_3,\ldots,y_n$.

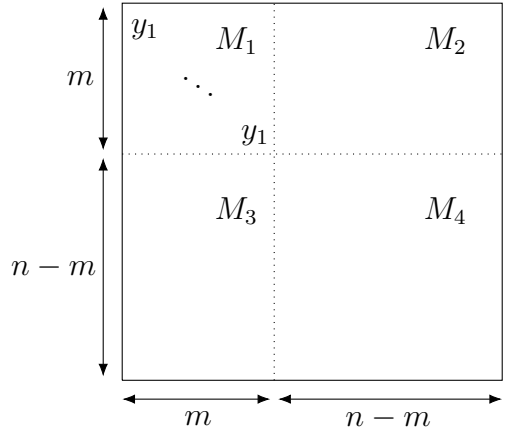
\begin{figure}[!ht]\begin{center}
\begin{tikzpicture}[x=0.5mm,y=0.5mm]

\draw (10,10)--(10,110)--(110,110)--(110,10)--(10,10);
\draw [dotted] (10,70)--(110,70);
\draw [dotted] (50,10)--(50,110);

\node at (40,100) {$M_1$};
\node at (95,100) {$M_2$};
\node at (40,55) {$M_3$};
\node at (95,55) {$M_4$};

\node at (16,103) {$y_1$};
\node at (30,90) {$\ddots$};
\node at (45,75) {$y_1$};

\draw [Latex-Latex] (10,5)--(49,5);
\node at (30,0) {$m$};
\draw [Latex-Latex] (51,5)--(110,5);
\node at (80,0) {$n-m$};

\draw [Latex-Latex] (5,71)--(5,109);
\node at (-1,90) {$m$};
\draw [Latex-Latex] (5,11)--(5,69);
\node at (-8,40) {$n-m$};
\end{tikzpicture}
\end{center}
\caption{A partition of $M$.\label{fig:HR}}
\end{figure}

Region $M_1$ contains all occurrences of $y_1$ and so region $M_4$ must be full, otherwise $y_1$ could be inserted into an empty cell contradicting the maximality of $M$. Since $m$ is the minimal frequency we then see that $M$ has $m$ rows each with at least $m$ entries and $n-m$ rows each with at least $n-m$ entries, giving $F\ge m^2+(n-m)^2=2m^2-2mn +n^2$. If $n=2s$ this gives $2s^2\ge 2m^2-4ms+4s^2=2s^2+2(s-m)^2$, which is only possible if $m=s$. Similarly, if $n=2s+1$, we get $2s^2+2s+1\ge 2m^2-4ms-2m+4s^2+4s+1$, and this gives $2s^2\ge 2s^2+2(s-m)^2+2(s-m)$, which again is only possible if $m=s$. In either case $F=m^2+(n-m)^2$, and it then follows that $M$ has $m$ rows with exactly $m$ entries and  $n-m$ rows with exactly $n-m$ entries. By symmetry between rows and columns, the same must be true for columns. Consequently regions $M_2$ and $M_3$ must be empty and regions $M_1$ and $M_4$ must be full.

Let $S$ denote the set of $n-m$ entries occurring in column $m+1$ of $M$, all of which occur in region $M_4$. We may take $S=\{y_{m+1},y_{m+2},\ldots, y_n\}$. Since $M$ is maximal, each row $i$ of $M$ for $1\le i\le m$ must contain all the entries in $S'=\{y_1,y_2,\ldots,y_m\}$, otherwise there exists some $k\le m$ such that the entry $y_k$ can be placed in cell $(i,m+1)$. Consequently each element of $S'$ must occur in each of the rows $1,2,\ldots,m$ of $M$ and so $M_1$ is a Latin square of order $m$ with entries from $S'$. Next consider the set of entries occurring in row $1$ of $M$. By employing the same argument again it can be seen that each column $m+i$ of $M$ for $1\le i \le n-m$ must contain all the entries in $S$. Hence $M_4$ is a Latin square of order $n-m$ with entries from $S$.
\end{proof}

\newpage

Two Latin squares of the same order $n$ are said to be \emph{orthogonal} if, when superimposed, each of the $n^2$ possible ordered pair of entries appears once and only once in the cells of the superimposed square. Such a pair of orthogonal Latin squares, when presented as two separate squares, is usually referred to as a pair of MOLS($n$) (``Mutually Orthogonal Latin Squares of order $n$''). We will generally use the superimposed form and consider a pair of orthogonal Latin squares as a single array with each cell having a first and a second entry. We denote such an array as an OLS($n$). An alternative representation is as a transversal design TD$(4,n)$, which comprises a set of $4n$ points partitioned into four $n$-element subsets, again called groups, and a set of $n^2$ quadruples such that each pair of points from different groups appears in precisely one quadruple and no quadruple contains more than one point from each group. The correspondence between these two representations is by taking the four groups to represent respectively rows, columns, first entries and second entries, and a quadruple $(a,b,c,d)$ to represent first entry $c$ and second entry $d$ in the cell corresponding to the intersection of row $a$ and column $b$ of the superimposed squares. An OLS($n$) can also be viewed as a partition of the complete 4-partite graph $K_{n,n,n,n}=K_{4\times n}$ into copies of $K_4$. The four parts of the graph represent the rows, columns, first entries and second entries, and the copies of $K_4$ form the (row, column, first entry, second entry) quadruples.

Two partial Latin squares of the same order $n$, having the same filled cells as one another, are said to be \emph{orthogonal} if, when superimposed, each of the $n^2$ possible ordered pair of entries appears at most once in the cells of the superimposed square. Again, we generally use the superimposed form and denote such an array as an OPLS($n$) (Orthogonal Partial Latin Square of order $n$). Alternative representations are as a partial transversal design PTD($4,n$) or as a decomposition of a subgraph of $K_{4\times n}$ into copies of $K_4$. The latter aspect is discussed in more detail after the definition in the next paragraph.

A \emph{maximal} OPLS($n$) is an OPLS($n$) that cannot be extended to another OPLS($n$) by inserting any entry pair into any empty cell. This is analogous to a maximal PLS($n$). We denote a maximal OPLS($n$) as an MOPLS($n$).  With the exceptions of $n=2$ and $n=6$, the maximum size (number of filled cells) of an MOPLS($n$) is $n^2$ since there is a pair of orthogonal Latin squares of order $n$ for every positive integer $n\ne2,6$. In the previous paper \cite{MOPLS1} a considerable number of partial results led to the conjecture that an OPLS($n$) with less than $n^2/3$ filled cells cannot be maximal.

\bigskip

An OPLS($n$) can be considered as a decomposition of a subgraph $G$ of $K_{4\times n}$ by taking the vertex set to be $V_1\cup V_2\cup V_3\cup V_4$, where $V_1=[n]\times\{1\}$, \linebreak $V_2=[n]\times\{2\}$, $V_3=[n]\times\{3\}$, and  $V_4=[n]\times\{4\}$, represent respectively the rows, columns, first entries, and second entries of the OPLS($n$). The edges of $G$ are determined by entries in the OPLS($n$): if the cell $(a,b)$ contains the entry pair $(c,d)$ then $G$ contains  a copy of $K_4$ with vertex set $\{(a,1),(b,2),(c,3),(d,4)\}$, and vice-versa. Thus $G$ decomposes into copies of $K_4$. As remarked earlier, this viewpoint shows that the vertex sets $V_i$ have equal status. So if no special property is assumed for any one of the vertex sets, then any results for any one of them will, by symmetry, apply to the others.

If the given OPLS($n$) is maximal (i.e. it is an MOPLS($n$)) then the corresponding complementary graph $G'=K_{4\times n}-G$ is $K_4$-free because, if there were a copy of $K_4$ on the vertex set $\{(\alpha,1),(\beta,2),(\gamma,3),(\epsilon,4)\}$ in $G'$ then cell $(\alpha,\beta)$ in the MOPLS($n$) could be filled with the entry pair $(\gamma,\epsilon)$, contradicting its maximality. The problem of finding the minimum size (number of filled cells) of an MOPLS($n$) is therefore equivalent to finding the maximum size of the corresponding $K_4$-free complementary graph $G'$. If there are $F$ filled cells in an OPLS($n$) then the number of edges between $V_i$ and $V_j$ for $1\le i,j\le 4$ in the corresponding complementary graph $G'$ is $n^2-F$, so that $G'$ has $6(n^2-F)$ edges. Furthermore, given a vertex $(x,i)$ with degree $v$, the number of edges joining $(x,i)$ to vertices in $V_j$ ~$(j\ne i)$ is $v/3$.

Determination of the  maximum number of edges in a $K_4$-free multipartite graph is a problem that has been studied extensively in the literature.  In \cite{Bollobas} the authors show that, in particular, the number of edges in a  $K_4$-free $4$-partite graph with parts of size $n$ is at most $5n^2$. Jin \cite{Jin} proved that if $G$ is a $4$-partite graph with parts of size $n$ then $G$ contains a $K_4$ if the minimum vertex degree $\delta(G)$ satisfies $\delta(G)>\lfloor(2+\frac{1}{3})n\rfloor$. Maybe the most important asymptotic contribution to the area is due to Pfender \cite{Pfender} who considers $l$-partite graphs satisfying a certain density condition. For such a graph $G=(V_1\cup V_2\cup ...V_l, E)$, define the density $d(V_i,V_j)$ between the vertex parts $V_i$ and $V_j$ ($i\ne j$) as
\[d(V_i, V_j) = \frac {||G[V_i \cup V_j ]||}{|V_i|\cdot|V_j |},\]
where $||G[V_i \cup V_j ]||$ is the number of edges between $V_i$ and $V_j$, and $|V_k|$ is the number of vertices in $V_k$. Pfender proves that for large enough $l$, and apart from one specified family of graphs, such a graph having $d(V_i, V_j)\ge (k-2)/(k-1)$ for all $i\ne j$ must contain a copy of $K_k$. However $l$ depends on $k$ and the assumption in the paper \cite{Pfender} is that $l$ is very much larger than $k$. The smallest value of $l$ for which the theorem remains true has not generally been determined. In the current paper we are dealing with a particular instance of the case $l=k=4$. For the complementary graph $G'$ corresponding to an OPLS($n$) with $F$ filled cells, $d(V_i, V_j)=(n^2-F)/n^2$ and we will show that if $F<n^2/3$, so that $d(V_i, V_j)>2/3=(k-2)/(k-1)$, then $G'$ contains a copy of $K_4$ and the OPLS($n$) is not maximal.

\bigskip

It is always possible to find an MOPLS($n$) with around $n^2/3$ filled cells. Figure \ref{fig:MOPLS(9)} shows an example of an MOPLS(9). It is easy to explain how this can be generalised to larger orders $n\ge 21$.
\begin{figure}[ht]
\begin{center}
\[\begin{array}{|ccccccccc|}\hline
11&22&33&-&-&-&-&-&-\\ 23&31&12&-&-&-&-&-&-\\ 32&13&21&-&-&-&-&-&-\\
-&-&-&44&55&66&-&-&-\\ -&-&-&56&64&45&-&-&-\\ -&-&-&65&46&54&-&-&-\\
-&-&-&-&-&-&77&88&99\\ -&-&-&-&-&-&89&97&78\\ -&-&-&-&-&-&98&79&87\\
\hline
\end{array}\]
\end{center}
\caption{An MOPLS(9) with 27 filled cells.\label{fig:MOPLS(9)}}
\end{figure}
To check that the OPLS(9) shown in Figure \ref{fig:MOPLS(9)} is maximal, consider the empty cell in the first row and fourth column. It is clear that the only candidates for first entry are $7, 8, 9$. The same goes for candidates for second entry. But every ordered pair from $\{7,8,9\}$ already appears in the square. A similar argument applies to every empty cell.

For $n\ge 21$ a similar MOPLS($n$) may be created having $\lceil n^2/3 \rceil$ filled cells. Suppose that $n=3s+r$, where $r=0,1$ or 2. Starting from an empty $n\times n$ array, place three orthogonal Latin squares down the leading diagonal. The first (upper left) is of order $s$ with first and second entry symbols chosen from $\{1,2,\ldots,s\}$. The second (central) is of order $s$ with first and second entry symbols chosen from $\{s+1,s+2,\ldots,2s\}$ if $r=0$ or if $r=1$, or is of order $s+1$ with first and second entry symbols chosen from $\{s+1,s+2,\ldots,2s+1\}$ if $r=2$. The third (lower right) is of order $s$ if $r=0$, or is of order $s+1$ if $r=1$ or 2. The entry set for this third square comprises the remaining symbols from $\{1,2,\ldots,n\}$, not already used in the first and second squares. For $m\ne 2, 6$ there exists an OLS($m$)(see \cite{HBK}). Hence the construction is always possible for $n\ge 21$ since this entails $s\ge 7$. The argument that the resulting OPLS($n$) is maximal mirrors that given for the MOPLS(9). The number of filled cells is $F$ where
\[F=\left\{\begin{array}{ccccc}
3s^2&=&n^2/3 &\mbox{if} &r=0,\\
2s^2+(s+1)^2&=&(n^2 + 2)/3 &\mbox{if} &r=1,\\
s^2+2(s+1)^2&=&(n^2+2)/3 &\mbox{if} &r=2.
\end{array}\right\}=\left\lceil \frac{n^2}{3} \right\rceil. \]
We will prove in the next section that these values are best (i.e. minimum)  possible.

Finally in this section we define some additional terminology and notation. For any array we use $f_r(w)$ to denote the frequency of row $w$ (i.e., the number of filled cells in row $w$) and $f_c(x)$ to denote the frequency of column $x$. For an OPLS($n$) we will also speak of the frequency of a first entry $y$ and of a second entry $z$, meaning the number of times $y$ and $z$ appear as entries in the OPLS($n$).

\section{The main result}
We will prove the following result.

\begin{theorem}\label{th:main}
 Suppose that $M$ is an MOPLS($n$) with $F$ filled cells. Then $F\ge n^2/3$, with equality if and and only if every frequency (rows, columns, first entries, second entries) is $n/3$.
\end{theorem}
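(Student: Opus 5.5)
The plan is to adapt the argument of Lemma~\ref{lem:HR} to the four-part setting, using the $K_4$-free complementary graph $G'$ to keep the four vertex sets on an equal footing. Let $m$ be the minimum frequency of $M$ over rows, columns, first entries and second entries. Since the average frequency is $F/n$, we have $m\le F/n$. Hence $F\ge nm$, and if $m\ge n/3$ we are done. Moreover, equality $F=n^2/3$ then forces every frequency to equal exactly $n/3$, which is the characterisation in Theorem~\ref{th:main}. So the main work is to rule out $m<n/3$ with $F< n^2/3$ (for $n\ge 1$; the case $m=0$ is immediate, since then some symbol is missing and the whole square must be full).

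By symmetry we may assume a first entry $y$ occurs exactly $m$ times, in a set $R$ of $m$ rows and a set $C$ of $m$ columns. Let $B$ be the $(n-m)\times(n-m)$ block of cells $(a,b)$ with $a\notin R$, $b\notin C$. Take any empty cell $(a,b)$ of $B$. Placing $y$ there is blocked only by the second entry, so every second entry $z$ must lie in row $a$, lie in column $b$, or already be paired with $y$. Only $m$ second entries are paired with $y$, so
\[ f_r(a)+f_c(b)\ \ge\ n-m \quad\text{for every empty cell } (a,b)\in B. \]
This is the analogue of ``$M_4$ must be full'' in Lemma~\ref{lem:HR}. The analogous inequality holds with the roles of the four parts permuted, for instance with first entries and second entries swapped when $y$ is taken to be a second entry.

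Next I would double count $\sum_{(a,b)\in B}\bigl(f_r(a)+f_c(b)\bigr)$. Let $E$ be the number of filled cells of $B$. On one side the sum equals $(n-m)\bigl(\sum_{a\notin R}f_r(a)+\sum_{b\notin C}f_c(b)\bigr)$. The rows of $R$ carry at least $m^2$ filled cells and the columns of $C$ likewise, so this is at most $2(n-m)(F-m^2)$. On the other side, empty cells of $B$ contribute at least $n-m$ each and filled cells at least $2m$ each. Using $E\le F-m^2$ as well, this gives
\[ (F-m^2)(3n-5m)\ \ge\ (n-m)^3 \quad (m\le n/3). \]
At $m=n/3$ this yields exactly $F\ge n^2/3$.

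The main obstacle is that this single inequality is too weak for intermediate values such as $m\approx n/6$. I would sharpen it in two ways. First, the bound $E\le F-m^2$ is crude. The $m$ columns of $C$ also contain cells outside $R$, and the cells of $B$ adjacent to the $y$-cells interact with them, so a better estimate of $E$ should be available. Second, the real gain should come from exploiting the other two parts. For a filled cell $(a,b)$ of $B$ with entry pair $(c,d)$, maximality around the empty cells in its row and column forces additional frequency constraints on $c$ and $d$. I would also choose $y$ to minimise over all four parts simultaneously, in order to combine the row/column counting with the corresponding counting over first/second entries. Using all four parts this way should raise the filled-cell contribution from $2m$ to something that makes the optimisation over $m$ bottom out at $m=n/3$. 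Finally I would check that equality propagates through every inequality used, which forces $m=n/3$ and hence that every frequency equals $n/3$.
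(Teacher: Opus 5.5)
\textbf{There is a genuine gap.} Your local inequality $f_r(a)+f_c(b)\ge n-m$ for empty cells of $B$ is the right ingredient, and your averaged bound $(F-m^2)(3n-5m)\ge (n-m)^3$ is correct. But it is too weak: it gives only $F\ge m^2+(n-m)^3/(3n-5m)\approx 0.295n^2$ at $m=n/6$. Everything after that point is a list of hoped-for improvements, not an argument, so the theorem remains unproved for $0\le m<n/3$, which is its whole content.

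The loss comes from charging each filled cell of $B$ only $2m$. Sharpening $E\le F-m^2$ cannot repair this. When the rows of $R$ and the columns of $C$ have all their entries inside $R\times C$, one has $E=F-m^2$ exactly, and every other step of your chain is also tight. The extra constraints you hope to extract from first and second entries are not where the gain lies either.

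Your treatment of $m=0$ is also wrong. If a first entry is missing then $F\le n(n-1)$, so the square cannot be full. In an OPLS a missing symbol does not make an empty cell fillable, because the second entry may be blocked. Moreover, your averaged bound at $m=0$ gives only $F\ge n^2/3$ and does not exclude equality.

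The missing idea is to sum $f_r+f_c$ along a single transversal of $M$ rather than over all of $B$. The sum is then exactly $2F$, and you get to choose which cells of $B$ are charged. Take a maximum partial transversal $T$ of $t$ empty cells of $B$, and let $(i,j)\in B$ avoid the rows and columns of $T$. Then:
\begin{itemize}
\item $(i,j)$ is filled, by maximality of $T$;
\item for each $(k,k')\in T$, at least one of $(i,k')$ and $(k,j)$ is filled, since otherwise replacing $(k,k')$ by these two empty cells would enlarge $T$;
\item hence $f_r(i)+f_c(j)\ge t+2(n-m-t)=2(n-m)-t$.
\end{itemize}
Complete $T$ by a transversal $T'$ of the (full) block avoiding $T$, and add the $m$ cells containing $y$. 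Summing over this transversal of $M$ gives
\[2F\ge 2m^2+t(n-m)+(n-m-t)\bigl(2(n-m)-t\bigr)=2m^2+(n-m)^2+(n-m-t)^2,\]
so $F\ge \frac{n^2}{3}+\frac{(n-3m)^2}{6}+\frac{(n-m-t)^2}{2}$. This handles every $m$ at once; at $m=0$ it even gives $F\ge n^2/2$. Equality forces $m=n/3$, and then every frequency equals $n/3$.
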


The first step is to establish the following lemma.
\begin{lemma}\label{lem:transversals}
Suppose that $D$ is a $d\times d$ array in which some cells may be filled and some may be empty. Let $T$ be a maximum partial transversal of $t$ empty cells in $D$, where $0\le t\le d$. By permuting rows and columns we can assume that $T=\{(1,1), (2,2),\ldots, (t,t)\}$, and let $T'$ be the continuation of this leading diagonal, comprising cells $(i,i)$ for $t+1\le i\le d$. Then every cell $(i,j)$ with $i,j\in\{t+1, t+2, \ldots,d\}$ is filled and $f_r(i)+f_c(j)\ge 2d-t$. In particular, for each cell $(i,i)\in T'$, $f_r(i)+f_c(i)\ge 2d-t$.
\end{lemma}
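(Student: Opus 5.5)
The plan is to argue by maximality of $T$, in the style of the augmenting-path argument for bipartite matchings. Throughout, a partial transversal of empty cells is a set of empty cells with no two in the same row or column.

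The first claim is immediate. If some cell $(i,j)$ with $i,j\ge t+1$ were empty, then $T\cup\{(i,j)\}$ would be a partial transversal of $t+1$ empty cells. Row $i$ and column $j$ are not used by $T$, so this contradicts the maximality of $T$.

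For the frequency bound, fix $i,j\in\{t+1,\ldots,d\}$. Row $i$ has $d-f_r(i)$ empty cells and column $j$ has $d-f_c(j)$ empty cells. By the first claim, every empty cell of row $i$ lies in one of columns $1,\ldots,t$, and every empty cell of column $j$ lies in one of rows $1,\ldots,t$. The key step is to show that for each $k\in\{1,\ldots,t\}$, the cells $(i,k)$ and $(k,j)$ are not both empty. Suppose they were. Then
\[
\bigl(T\setminus\{(k,k)\}\bigr)\cup\{(i,k),(k,j)\}
\]
would consist of $t+1$ empty cells. We check it is a partial transversal:
\begin{itemize}
\item row $k$ is now used only by $(k,j)$, and column $k$ only by $(i,k)$;
\item row $i$ and column $j$ were unused by $T$, since $i,j>t$;
\item $(i,k)$ and $(k,j)$ share neither a row nor a column, because $i\ne k$ and $j\ne k$.
\end{itemize}
This contradicts the maximality of $T$.

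It follows that for each $k\le t$, at most one of the pair $(i,k)$, $(k,j)$ is empty. The empty cells counted in row $i$ and in column $j$ are indexed by $k$ ranging over $\{1,\ldots,t\}$, with at most one per $k$. Hence
\[
(d-f_r(i))+(d-f_c(j))\le t,
\]
which rearranges to $f_r(i)+f_c(j)\ge 2d-t$. Taking $j=i$ gives the statement for the cells of $T'$.

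I expect no real obstacle. The only point needing care is confirming that the swap in the key step produces a genuine partial transversal, which is the check listed above.
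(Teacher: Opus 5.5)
Your proposal is correct and follows essentially the same route as the paper. You show $D_4$ is full by maximality of $T$, then use the same swap of $(k,k)$ for $(i,k)$ and $(k,j)$ to show at most one of these is empty. The only difference is cosmetic: you bound the empty cells in row $i$ and column $j$ by $t$, whereas the paper counts at least $t$ filled cells in $D_2\cup D_3$ plus $2(d-t)$ filled cells in $D_4$.
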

\begin{proof} By saying that $T$ is a maximum partial transversal of empty cells in $D$ we mean that no row or column of $D$ has more than one cell from $T$, and that there is no larger partial transversal of empty cells. We do not exclude the possibilities that $T$ is empty or that $T$ is a complete transversal of $d$ cells.

The array $D$ is illustrated in Figure \ref{fig:D}. It is partitioned into four parts labelled $D_1, D_2, D_3, D_4$ as shown. We use $\emptyset$ to denote an empty cell and $\star$ to denote a filled cell. Initially we assume that $t\ne 0,d$.
 \begin{figure}[!ht]\begin{center}
\begin{tikzpicture}[x=1mm,y=1mm]

\draw (10,10)--(10,110)--(110,110)--(110,10)--(10,10);
\draw [dotted] (10,70)--(110,70);
\draw [dotted] (50,10)--(50,110);

\node at (30,105) {$D_1$};
\node at (85,105) {$D_2$};
\node at (30,55) {$D_3$};
\node at (85,55) {$D_4$};

\node at (14,105) {$\emptyset$};
\node at (30,90) {$T$};
\node at (47,73) {$\emptyset$};
\draw [Latex-] (20,100)--(25,95);
\draw [-Latex] (35,85)--(40,80);

\node at (54,65) {$\star$};
\node at (80,40) {$T'$};
\node at (107,13) {$\star$};
\draw [Latex-] (60,60)--(75,45);
\draw [-Latex] (85,35)--(100,20);

\draw [Latex-Latex] (10,8)--(49,8);
\node at (30,3) {$t$};
\draw [Latex-Latex] (51,8)--(110,8);
\node at (80,3) {$d-t$};

\draw [Latex-Latex] (7,71)--(7,109);
\node at (0,90) {$t$};
\draw [Latex-Latex] (7,11)--(7,69);
\node at (0,40) {$d-t$};
\end{tikzpicture}
\end{center}
\caption{A partition of $D$.\label{fig:D}}
\end{figure}

The first observation is that region $D_4$ cannot contain any empty cells since otherwise $T$ would not be a maximum partial transversal of empty cells. So every row and column in the sub-array $D_4$ has $d-t$ filled cells.

Now consider any cell $(i,j)\in D_4$, so that $t+1\le i,j\le d$, and take $k\in\{1,2,\ldots,t\}$. If both cells $(i,k)$ and $(k,j)$ (in regions $D_3$ and $D_2$ respectively) were empty then $T$ could be extended by removing cell $(k,k)$ and adding cells $(i,k)$ and $(k,j)$, giving a transversal $\overline{T}$ having $t+1$ empty cells, a contradiction. So at least one of the cells $(i,k)$ and $(k,j)$ must be filled for each $k\in\{1,2,\ldots,t\}$. Thus the union of row $i$ in region $D_3$ and column $j$ in region $D_2$ contains at least $t$ filled cells. Since all the cells in region $D_4$ are filled, we obtain
\[f_r(i)+f_c(j)\ge t+2(d-t)=2d-t. \]
In particular, this applies when the cell $(i,j))$ lies in $T'$, i.e. when $i=j$.

In the extreme case $t=0$, region $D_4$ comprises the whole of the array $D$ and all cells must be filled. At the other extreme $t=d$, region $D_1$ comprises the whole of the array $D$ and there is nothing to prove.
\end{proof}

\bigskip

\begin{proof}[\textbf{Proof of Theorem \ref{th:main}}]
Let $m$ denote the minimum frequency in $M$ of rows, columns, first entries and second entries. Then $m\le F/n$. By symmetry we can assume, without loss of generality, that the frequency $m$ is achieved by a first entry $y_1$ and that $y_1$ appears paired with $m$ second entries that we can denote as $z_1,z_2,\ldots,z_m$. If $m>0$, the rows and columns of $M$ can be rearranged as shown in Figure \ref{fig:ABCD}, thereby partitioning $M$ into four regions labelled $A,B,C,D$. If $m=0$ then regions $A,B,C$ do not exist, and region $D$ comprises the whole of $M$.

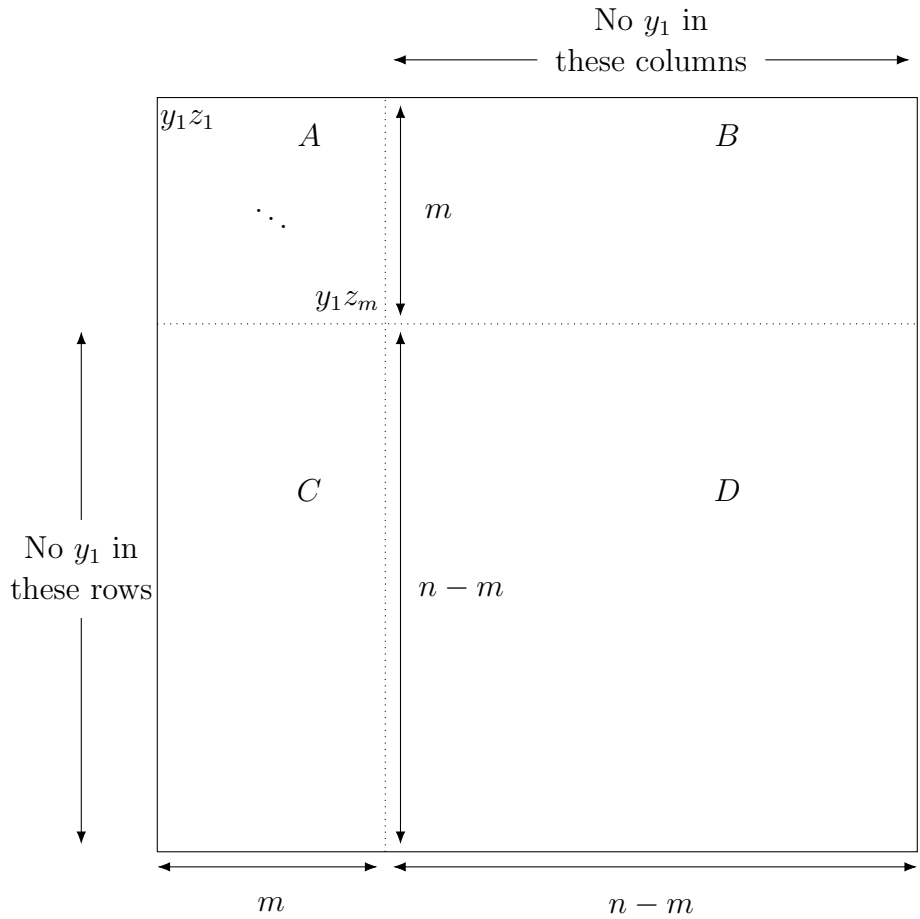
\begin{figure}[!ht]\begin{center}
\begin{tikzpicture}[x=1mm,y=1mm]

\draw (10,10)--(10,110)--(110,110)--(110,10)--(10,10);
\draw [dotted] (10,80)--(110,80);
\draw [dotted] (40,10)--(40,110);

\node at (30,105) {$A$};
\node at (85,105) {$B$};
\node at (30,58) {$C$};
\node at (85,58) {$D$};

\node at (14,107) {$y_1z_1$};
\node at (25,95) {$\ddots$};
\node at (35,83) {$y_1z_m$};
\node at (75,120) {No $y_1$ in};
\node at (75,115) {these columns};
\draw [Latex-] (41,115)--(60,115);
\draw [-Latex] (90,115)--(109,115);
\node at (0,50) {No $y_1$ in};
\node at (0,45) {these rows};
\draw [Latex-] (0,11)--(0,41);
\draw [-Latex] (0,54)--(0,79);

\draw [Latex-Latex] (10,8)--(39,8);
\node at (25,3) {$m$};
\draw [Latex-Latex] (41,8)--(110,8);
\node at (75,3) {$n-m$};

\draw [Latex-Latex] (42,81)--(42,109);
\node at (47,95) {$m$};
\draw [Latex-Latex] (42,11)--(42,79);
\node at (50,45) {$n-m$};
\end{tikzpicture}
\end{center}
\caption{A partition of $M$.\label{fig:ABCD}}
\end{figure}

If $I=(i,i)$ is one of the $m$ cells on the leading diagonal in region $A$, then $f_r(i)\ge m$ and $f_c(i)\ge m$, so
$f_r(i)+f_c(i)\ge 2m$.

Region $D$ will contain some filled cells and some empty cells. We may apply Lemma \ref{lem:transversals} to region $D$ by taking  $d=n-m$.

If $I=(i,i)$ is one of the $t$ empty cells in the maximum partial transversal $T$ of empty cells in $D$ then the union of row $i$ and column $i$ must contain at least $n-m$ entries, otherwise there would be some second entry $z_k$ with $k\ne 1,2,\ldots,m$ that  could be paired with $y_1$ and placed in $I$, contradicting the maximality of $M$. Hence $f_r(i)+f_c(i)\ge n-m$ for each $i\in\{m+1,$ $m+2,\ldots,m+t\}$.

If $I=(i,i)$ is one of the $n-m-t$ filled cells in region $D$ lying in the continuation partial transversal $T'$ then, by Lemma \ref{lem:transversals} $f_r(i)+f_c(i)\ge 2(n-m)-t$. As an aside we remark that $f_r(i)+f_c(j) \ge 2(n-m)-t$ for any cell $(i,j)$ lying in the region corresponding to $D_4$ of Figure \ref{fig:D}.

Summing over all the cells on the leading diagonal gives
\[\sum_{i=1}^n \left(f_r(i)+f_c(i)\right)\ge 2m^2+t(n-m)+(n-m-t)(2(n-m)-t).\]

Since
\[\sum_{i=1}^n f_r(i)=F=\sum_{i=1}^n f_c(i),\]
we obtain
\begin{align*}2F &\ge 2m^2+t(n-m)+(n-m-t)(2(n-m)-t)\\
 &=2m^2+(n-m)^2+(n-m-t)^2\\
 &=(n-m-t)^2+\frac{2n^2}{3}+\frac{n^2}{3}-2nm+3m^2\\
 &=(n-m-t)^2+\frac{(n-3m)^2}{3}+\frac{2n^2}{3}.
\end{align*}
This gives
\begin{align}F&\ge \left\lceil\frac{(n-m-t)^2}{2}+\frac{(n-3m)^2}{6}+\frac{n^2}{3}\right\rceil.\label{ineq:main}\end{align}

It follows that $F\ge n^2/3$. From the inequality (\ref{ineq:main}) it is clear that the equality $F=n^2/3$ is only possible if $t=n-m$, and $m=n/3$, which of course implies that $n\equiv 0$ (mod 3). And if $m=n/3$ then the frequency of every row (also every column, first entry, second entry) is $n/3$. Conversely, if the frequency of every row is $n/3$, then $F=\sum_{i=1}^n f_r(i)=n^2/3$.
\end{proof}

\newpage

\noindent\textbf{Further consequences}\\
We can extract more information from inequality (\ref{ineq:main}).

\smallskip

If $n=3s$ it gives $F\ge n^2/3$, with equality if and only if $m=s=n/3$ and $t=n-m=2m$.

\smallskip

If $n=3s+1$ it gives
\[F\ge \left\lceil\frac{9s^2+6s+1}{3}\right\rceil= \frac{9s^2+6s+3}{3}=\frac{n^2+2}{3},\]
with equality if and only if
\[\frac{(n-m-t)^2}{2}+\frac{(3s+1-3m)^2}{6}\le \frac23.\] The solutions to this are (i) $m=s$ and either $t=n-m$ or $t=n-m-1$, and (ii) $m=s+1$ and $t=n-m$. But case (ii) is impossible for $n>1$ since the minimum frequency $m=s+1$ cannot exceed the mean frequency $ \frac{n^2+2}{3n}=\frac{n}{3} + \frac{2}{3n}=s+\frac13+\frac{2}{3n}<s+1$ for $n>1$. So when $n=3s+1$ and $s\ne 0$, the only possibility for equality in (\ref{ineq:main}) is that $m=s$, and either $t=n-m=2m+1$ or $t=n-m-1=2m$. We will discuss these two options for $t$ below, and show that $t=2m+1$ cannot give a minimum MOPLS($n$).

\smallskip

If $n=3s+2$ inequality (\ref{ineq:main}) gives
\[F\ge \left\lceil\frac{9s^2+12s+4}{3}\right\rceil= \frac{9s^2+12s+6}{3}=\frac{n^2+2}{3},\]
with equality if and only if
\[\frac{(n-m-t)^2}{2}+\frac{(3s+2-3m)^2}{6}\le \frac23.\] The solutions to this are (i) $m=s$ and $t=n-m$, and (ii) $m=s+1$ and either $t=n-m$ or $t=n-m-1$. But case (ii) is impossible for $n>2$ since the minimum frequency $m=s+1$ cannot exceed the mean frequency $ \frac{n^2+2}{3n}=\frac{n}{3} + \frac{2}{3n}=s+\frac23+\frac{2}{3n}<s+1$ for $n>2$. So when $n=3s+2$ and $s\ne 0$, the only possibility for equality in (\ref{ineq:main}) is that $m=s$, and $t=n-m=2m+2$.

\bigskip

These observations, taken with the construction given in the Introduction, enable us to state the following corollary.

\begin{cor}\label{cor:main}
For $n\ge 21$, the minimum number of filled cells $F$ achievable in an MOPLS($n$) is given by
\[F=\left\{\begin{array}{cl}
n^2/3 &\mbox{if~~}  n\equiv 0 \pmod{3}\\
(n^2 + 2)/3 &\mbox{if~~} n\equiv 1 \mbox{~or~} 2 \pmod{3}
\end{array}\right\} = \left\lceil\frac{n^2}{3}\right\rceil.\]
Moreover, the minimum frequency in each of these cases is $m=\lfloor n/3 \rfloor$.
\end{cor}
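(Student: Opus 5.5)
The corollary follows by combining the lower bound from Theorem \ref{th:main}, sharpened via inequality (\ref{ineq:main}), with the construction in the Introduction. I would argue the two directions separately and then pin down the minimum frequency.

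\emph{Lower bound.} For an arbitrary MOPLS($n$), inequality (\ref{ineq:main}) gives $F\ge \lceil n^2/3\rceil$. Since $F$ is an integer and $F\ge n^2/3$, we get $F\ge\lceil n^2/3\rceil$. Evaluating this as in the ``Further consequences'' discussion gives $n^2/3$ when $n\equiv 0\pmod 3$. When $n\equiv 1$ or $2\pmod 3$, we have $n^2\equiv 1\pmod 3$, so $\lceil n^2/3\rceil=(n^2+2)/3$. This holds for every $n$ and needs no restriction on $n$.

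\emph{Upper bound.} For $n\ge 21$, write $n=3s+r$ with $r\in\{0,1,2\}$, so $s\ge 7$. The Introduction then places OLS of orders $s$ or $s+1$ down the leading diagonal, on disjoint symbol sets; these exist because neither order is $2$ or $6$. The resulting array has exactly $\lceil n^2/3\rceil$ filled cells. The only step needing care is maximality, and I would write it out rather than say it ``mirrors'' the $n=9$ case. Take an empty cell $(a,b)$ with $a$ in block $i$ and $b$ in block $j\ne i$. Row $a$ already contains every first and second symbol of block $i$, and column $b$ contains every symbol of block $j$. So any admissible pair $(\gamma,\epsilon)$ must have both coordinates in the symbol set of the third block $k$. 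Block $k$ is a full OLS on that symbol set, so every ordered pair from it already occurs, and $(a,b)$ cannot be filled. The two bounds together give the stated value of $F$.

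\emph{Minimum frequency.} Now let $M$ be any MOPLS($n$) attaining $F=\lceil n^2/3\rceil$. Then equality must hold in (\ref{ineq:main}), and the case analysis in ``Further consequences'' applies. For $n=3s$ it forces $m=s$. For $n=3s+1$ and $n=3s+2$, the alternative $m=s+1$ is excluded because the minimum frequency cannot exceed the mean $F/n<s+1$, which again leaves $m=s$. In every case $m=\lfloor n/3\rfloor$. The construction itself is consistent with this, since its smallest block has order $s$.

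The main obstacle, though minor, is the equality analysis in the residue classes $1,2\pmod 3$. There I must check that every pair $(m,t)$ consistent with the $\lceil\cdot\rceil$ bound satisfies $\frac{(n-m-t)^2}{2}+\frac{(n-3m)^2}{6}\le\frac23$. This bounds $|n-3m|\le 2$ and hence leaves only $m\in\{s,s+1\}$, after which the mean-frequency argument removes $m=s+1$.
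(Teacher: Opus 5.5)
Your proposal is correct and takes the same route as the paper. It combines the lower bound $F\ge\lceil n^2/3\rceil$ from inequality (\ref{ineq:main}) with the diagonal-OLS construction as a matching upper bound for $n\ge 21$. It then uses the equality analysis of the ``Further consequences'', with $F/n<s+1$ ruling out $m=s+1$, to force $m=\lfloor n/3\rfloor$. Your explicit check that the block construction is maximal is a welcome addition, since the paper only says it mirrors the $n=9$ case.
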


It can also be shown that the minimum MOPLS($n$)s described in the Introduction are, essentially, the only minimum MOPLS($n$) for $n\ge 21$. The word ``essentially'' is necessary because it is possible to permute rows, columns and entries and thereby make these designs look different.

\begin{theorem}\label{th:subsquares}
Suppose that $n\ge 21$ and that $M$ is a \emph{minimum} MOPLS($n$), so that $M$ has $F=\left\lceil\frac{n^2}{3}\right\rceil$ filled cells. Then $M$ can be partitioned into three OLSs (i.e. mutually orthogonal Latin squares) on different point sets with orders that sum to $n$ and are either $\left\lfloor n/3\right\rfloor$ or $\left\lfloor n/3\right\rfloor+1$.
\end{theorem}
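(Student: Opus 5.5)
The plan is to extract structure from the equality case of inequality (\ref{ineq:main}), then use maximality to force a block-diagonal form. Throughout I write $s=\lfloor n/3\rfloor$, $m$ for the minimum frequency, and $t$ for the size of a maximum partial transversal of empty cells in region $D$ of Figure \ref{fig:ABCD}.

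\textbf{Step 1: tightness.} By the Further consequences and Corollary \ref{cor:main}, a minimum MOPLS($n$) has $m=s$. Moreover $t=n-m$, except when $n\equiv1\pmod 3$, where $t=n-m-1$ is also possible. In the case $n\equiv0\pmod3$, every inequality used in the proof of Theorem \ref{th:main} is an equality:
\begin{itemize}
\item $f_r(i)=f_c(i)=m$ for $i\le m$;
\item $f_r(i)+f_c(i)=n-m$ for each empty diagonal cell $(i,i)$ of $T$.
\end{itemize}
In the other residues the total slack is at most $\frac23\cdot2$ in $2F$, so all but at most one of these inequalities is tight. This small slack must be tracked throughout.

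\textbf{Step 2: a counting argument confining the $z_k$ with $k>m$.} Let $Z'=\{z_{m+1},\dots,z_n\}$. For each $i$ with $(i,i)\in T$, maximality forbids placing $(y_1,z_k)$ in $(i,i)$ for every $z_k\in Z'$. So every $z_k\in Z'$ occurs in row $i$ or column $i$. Moreover, tightness of $f_r(i)+f_c(i)=n-m$ says that row $i$ and column $i$ together carry each element of $Z'$ exactly once and nothing else.

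A single occurrence of $z_k$ in a cell $(r,c)$ serves at most two indices of $T$, namely $r$ and $c$. Hence the frequency of $z_k$ is at least $t/2$. With $t=2m$ and the frequency of $z_k$ equal to $m$ (which is forced when $n=3s$), every occurrence of every $z_k\in Z'$ lies in region $D$, in a cell $(r,c)$ with $r\ne c$ both indexing $T$.

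Now count cells. Region $D$ then contains at least $(n-m)m=2m^2$ filled cells. The $z_1,\dots,z_m$ account for the remaining $m^2$ cells, at least $m$ of which are in $A$. By choosing a different first entry of frequency $m$ (all frequencies equal $m$ when $n=3s$), or by running the same argument with the roles of first and second entries exchanged, we show that $B$ and $C$ are empty. It then follows that $A$ is full, with second entries exactly $\{z_1,\dots,z_m\}$. Applying the same argument with rows or columns in the role of $y_1$ shows that $A$ uses first entries $Y'=\{y_1,\dots,y_m\}$ only. Consequently $A$ is an OLS($m$) on $Y'\times Z'$ with $Z'=\{z_1,\dots,z_m\}$, exactly as in Lemma \ref{lem:HR}.

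\textbf{Step 3: recurse on $D$.} Region $D$ is then an OPLS($n-m$) on the complementary symbol sets. It is maximal relative to those symbols, since any pair involving $Y'$ or $Z'$ cannot be placed in $D$. It has $F-m^2$ cells, which equals $2\lceil (n-m)^2/4\rceil$ or close to it. At this point I would mimic Lemma \ref{lem:HR}. Take a symbol of minimum frequency in $D$ and its rectangle. Maximality forces the complementary block to be full, and the convexity computation $m'^2+(n-m-m')^2$ against the available cell count forces $m'\in\{s,s+1\}$ with the off-diagonal blocks empty. Then the argument at the end of Lemma \ref{lem:HR}, applied to first and second entries separately, makes each diagonal block an OLS.

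\textbf{Main obstacle.} The hard part is the cases $n\equiv1,2\pmod 3$. There the frequencies are not all equal, the bound in Step 2 has slack of up to one cell, and for $n\equiv1$ one must exclude $t=2m+1$ as announced in the Further consequences. My first attempt would be to show that when $t=n-m$, one of the $z_k\in Z'$ is forced to have frequency exceeding the mean. I would also use the parity of $t$ against the condition that each occurrence covers two $T$-indices, which shows $t$ must be even in the tight case. This kills $t=2m+1$ and leaves $t=2m$, so that exactly one $T'$-cell carries the extra row of size $s+1$. The residual case analysis would be handled by the same counting as in Step 2, with a one-cell error term.
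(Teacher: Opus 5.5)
There is a genuine gap. Your treatment of $n\equiv 0\pmod 3$ is essentially the paper's, but the case $n\equiv 1\pmod 3$ is only sketched, and the sketch's key claim fails.

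\textbf{The case $n\equiv 1\pmod 3$.} Your parity idea does exclude $t=2m+1$. Each of the $2m+1$ second entries $z_{m+1},\ldots,z_n$ would need at least $m+1$ occurrences to cover $T$, giving $F\ge m^2+(2m+1)(m+1)>3m^2+2m+1$. But once $t=2m$, the Step~2 count is not ``off by one cell''. Each of those $2m+1$ second entries needs only $m$ occurrences, which leaves a slack of $m+1$ cells. So you can no longer conclude that all their occurrences lie in $D$, that $B$ and $C$ are empty, or that $A$ is full. Also, the target structure has $m+1$ rows of frequency $m+1$, not one extra row attached to the $T'$-cell.

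The missing idea is to work with a single second entry, say $z_{m+1}$, of frequency exactly $m$; such an entry exists by averaging.
\begin{enumerate}
\item Its $m$ occurrences must all lie in $D$, and they split the $2m$ indices of $T$ into $m$ rows and $m$ columns.
\item The other $m+1$ rows and $m+1$ columns of $D$ contain neither $y_1$ nor $z_{m+1}$. Hence the $(m+1)\times(m+1)$ block they span must be full, or else $(y_1,z_{m+1})$ fits.
\item Row counting now gives $2m$ rows of size $\ge m$ and $m+1$ rows of size $\ge m+1$, for a total of $3m^2+2m+1=F$. This is exact, so every block sharing a row or column with the full block is empty.
\item Let $S$ be the set of second entries in one column of the full block. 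Maximality forces each of the $m$ middle rows to contain every element of $\{z_{m+1},\ldots,z_n\}\setminus S$. This shows that $S$ avoids $z_1,\ldots,z_m$ and that the full block is a Latin square on $S$. It also forces the middle $m\times m$ block to be full.
\end{enumerate}
For $n\equiv 2$ your Step~2 does go through, and with no slack at all: $t=2m+2$ forces every $z_k$ with $k>m$ to occur at least $m+1$ times, and $m^2+(2m+2)(m+1)=F$.

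\textbf{Step 3.} This is a second gap, but a fixable one. Maximality of $D$ as an OPLS on its own symbols does not make the block complementary to a minimum-frequency symbol full. To place $y$ in an empty cell you also need a compatible second entry, and that is exactly what can fail for orthogonal partial squares. You stated the needed fact (pairs involving $y_1$ cannot be placed in $D$) but drew the wrong consequence from it. Since $y_1$ occurs in no row or column of $D$ and is never paired with any $z_k$ with $k>m$, maximality of $M$ says the following. Deleting the first entries of $D$ gives $D^*$, an MPLS($2m$) with $2m^2$ cells, and Lemma~\ref{lem:HR} applies to $D^*$ directly. Symmetrically, use $z_1$ to handle the first entries. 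The same device settles $n\equiv 2$, where $D^*$ is an MPLS($2m+2$).

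\textbf{A minor point in Step 2.} Showing that $B$ and $C$ are empty needs no change of symbol. Every row meeting $D$ has exactly $m$ entries, and $D$ already holds $2m^2$ of them.
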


\begin{proof}
In order to prove this we start with some additional observations.

First, in the proof of Theorem \ref{th:main} we assumed (without loss of generality) that the minimum frequency $m$ of rows, columns, first entries, and second entries was achieved by a first entry $y_1$. This allows the possibility that the minimum frequency of rows, columns or second entries might be greater than $m$. But in the case of a minimum MOPLS($n$) with $F=\lceil n^2/3 \rceil$ filled cells (and $n\ge 3$) the average frequency is $F/n<\lfloor\frac{n}{3}\rfloor+1$, so there must be a row with frequency at most $\lfloor n/3 \rfloor=m$. A similar argument applies to columns and second entries. Hence, for a minimum MOPLS($n$) with $n\ge 21$, there must be a row, a column, a first entry, and a second entry with minimum frequency $m=\lfloor n/3 \rfloor$.

Returning to the argument employed in obtaining inequality  (\ref{ineq:main}), and referring to Figures \ref{fig:D} and \ref{fig:ABCD}, it can be seen that equality is only possible if all the following conditions hold:
\begin{enumerate}
 \item $f_r(i)=m$ and $f_c(i)=m$ for $i=1,2,\ldots,m$,
 \item $f_r(i)+f_c(i)=n-m$ for $i=m+1,m+2,\ldots,m+t$,
 \item $f_r(i)+f_c(j)=2(n-m)-t$ for $i,j=m+t+1,m+t+2,\ldots,n$.
\end{enumerate}

The cases $n\equiv 0,1,2$ (mod 3) will be examined individually. But in each case we may assume that $M$ is as represented in Figure \ref{fig:ABCD} with cell $(i,i)$ containing entry $y_1z_i$ for $i=1,2,\ldots,m$. Regions $B, C, D$ do not contain any first entry $y_1$. Region $D$ has a transversal $T$ of $t$ empty cells where
\[t=\left\{\begin{array}{lr} 2m & \mbox{~if~} n\equiv0 \pmod{3},\\ 2m \mbox{~or~} 2m+1 & \mbox{~if~} n\equiv1 \pmod{3}, \\2m+2 & \mbox{~if~} n\equiv2 \pmod{3}.
         \end{array}\right.\]
\newpage

\noindent\textbf{The case $n\equiv 0$ (mod 3)}

Suppose that $M$ is a minimum MOPLS($n$) with $n\equiv 0$ (mod 3) and $n\ge 21$. There are $m=n/3$ filled cells in each row and each column, and each first and each second entry appears $m$ times in $M$.

Choose any $z_k$ with $k>m$ and consider the possibility of placing the entry $y_1z_k$ in one of the $2m$ cells of $T$. Since this must be impossible, it follows that each cell of $T$ must have this second entry $z_k$ in its row or column in $M$. We will say that such an entry \emph{covers} the corresponding cell(s) of $T$. But an entry $z_k$ in region $B$ or $C$ covers only one cell of $T$ and there are $2m$ cells of $T$ to be covered by $m$ occurrences of $z_k$. So each occurrence of $z_k$ with $k>m$ must occur in region $D$, where it covers two cells of $T$. It follows that region $D$ must have $2m^2$ filled cells with each row and column within $D$ having $m$ filled cells. Consequently regions $B$ and $C$ have no filled cells, and region $A$ has the remaining $m^2$ filled cells. It then follows that all $m$ occurrences of $z_i$ for $i=1,2,\ldots,m$ must occur in region $A$ since $D$ is ``full'' with second entries $z_k$ for $k>m$. Hence $A$ forms a Latin square LS($m$) on the second entries $z_1, z_2, \ldots z_m$.

Next take the region $D$ and delete all first entries to form a PLS($2m$), say $D^*$, on the second entries $z_{m+1},z_{m+2},\ldots, z_n$. If it were possible to insert $z_i$ with $i>k$ into any empty cell of $D^*$, then it would be possible to insert the entry pair $y_1z_i$ into the corresponding cell of $M$. Since this is impossible, we may conclude that $D^*$ is an MPLS($2m$). Since it has $2m^2$ filled cells, by permuting rows and columns it can be put into the form described in Lemma \ref{lem:HR} and shown in Figure \ref{fig:HR}, with two LS($m$) subarrays on the leading diagonal on distinct point sets $\{z_{m+1},z_{m+2},\ldots, z_{2m}\}$ and $\{z_{2m+1},z_{2m+2},\ldots, z_{3m}\}$. The same permutations can be applied to region $D$ so that $D$ has two filled $m\times m$ subarrays on its leading diagonal, and other cells of $D$ are empty.

Swapping the roles of first and second coordinates in the argument above shows that region $A$ forms a LS($m$) on a first entry set, say $\{y_1, y_2, \ldots, y_m\}$. Consequently $A$ is an OLS($m$). The two $m\times m$ subarrays in region $D$ also form LS($m$)s on first entry sets, say $\{y_{m+1},y_{m+2},\ldots, y_{2m}\}$ and $\{y_{2m+1},y_{2m+2},$ $\ldots, y_{3m}\}$. Consequently $D$ has two OLS($m$)s on its leading diagonal and, outside the three OLS($m$)s, all other cells of $M$ are empty.

\bigskip

\noindent\textbf{The case $n\equiv 1$ (mod 3)}

Suppose that $M$ is a minimum MOPLS($n$) with $n\equiv 1$ (mod 3) and $n\ge 22$. The minimum frequency   is $m=(n-1)/3$ and $F=(n^2+2)/3=3m^2+2m+1$.

If the frequency of each second entry $z_i$ with $i\ge m+1$ is at least $m+1$, then $F\ge m^2 +(2m+1)(m+1)=3m^2+3m+1>3m^2+2m+1=F$, a contradiction. Hence there is at least one second entry, say $z_{m+1}$, that occurs $m$ times in $M$. Each cell of $T$ must be covered by $z_{m+1}$ in its row or column otherwise $y_1z_{m+1}$ can be inserted into an empty cell of $T$, a contradiction. Each entry $z_{m+1}$ can cover at most two cells of $D$, and then only if the occurrences of $z_{m+1}$ all occur in $D$. So at most $2m$ cells of $T$ can be covered by an entry $z_{m+1}$, and consequently $t\ne 2m+1$. It follows that $t=2m$ and that all $m$ second entries $z_{m+1}$ are in region $D$. It also follows that inequality (\ref{ineq:main}) is actually an equality, so that there are exactly $m$ entries in each row $i$ and each column $i$ for $i=1,2,\ldots, m$, there are $n-m=2m+1$ entries in the union of each row and column of $T$, and there are $2(n-m)-t-1=2m+1$ entries in the union of the row and column through the single (filled) cell representing $T'$. The rows and columns intersecting region $D$ can therefore be permuted to give $M$ in the form shown in Figure \ref{fig:1mod3} where it is subdivided into regions $M_i$ for $i=1,2,\ldots,9$.

\begin{figure}[!ht]\begin{center}
\begin{tikzpicture}[x=1mm,y=1mm]

\draw (10,10)--(10,110)--(110,110)--(110,10)--(10,10);
\draw [dotted] (10,80)--(110,80);
\draw [dotted] (10,50)--(110,50);
\draw [dotted] (40,10)--(40,110);
\draw [dotted] (70,10)--(70,110);

\node at (30,105) {$M_1=A$};
\node at (60,105) {$M_2$};
\node at (95,105) {$M_3$};
\node at (30,71) {$M_4$};
\node at (60,71) {$M_5$};
\node at (95,71) {$M_6$};
\node at (30,40) {$M_7$};
\node at (60,40) {$M_8$};
\node at (95,40) {$M_9$};

\node at (14,107) {$y_1z_1$};
\node at (25,95) {$\ddots$};
\node at (35,83) {$y_1z_m$};

\node at (49,77) {$y_{m+1}z_{m+1}$};
\node at (55,65) {$\ddots$};
\node at (62,53) {$y_{2m}z_{m+1}$};

\draw [Latex-Latex] (10,8)--(39,8);
\node at (25,3) {$m$};
\draw [Latex-Latex] (41,8)--(69,8);
\node at (55,3) {$m$};
\draw [Latex-Latex] (71,8)--(110,8);
\node at (90,3) {$m+1$};

\draw [Latex-Latex] (113,81)--(113,109);
\node at (117,95) {$m$};
\draw [Latex-Latex] (113,51)--(113,79);
\node at (117,65) {$m$};
\draw [Latex-Latex] (113,11)--(113,49);
\node at (120,30) {$m+1$};
\end{tikzpicture}
\end{center}
\caption{A partition of $M$ when $n\equiv 1$ (mod 3).\label{fig:1mod3}}
\end{figure}
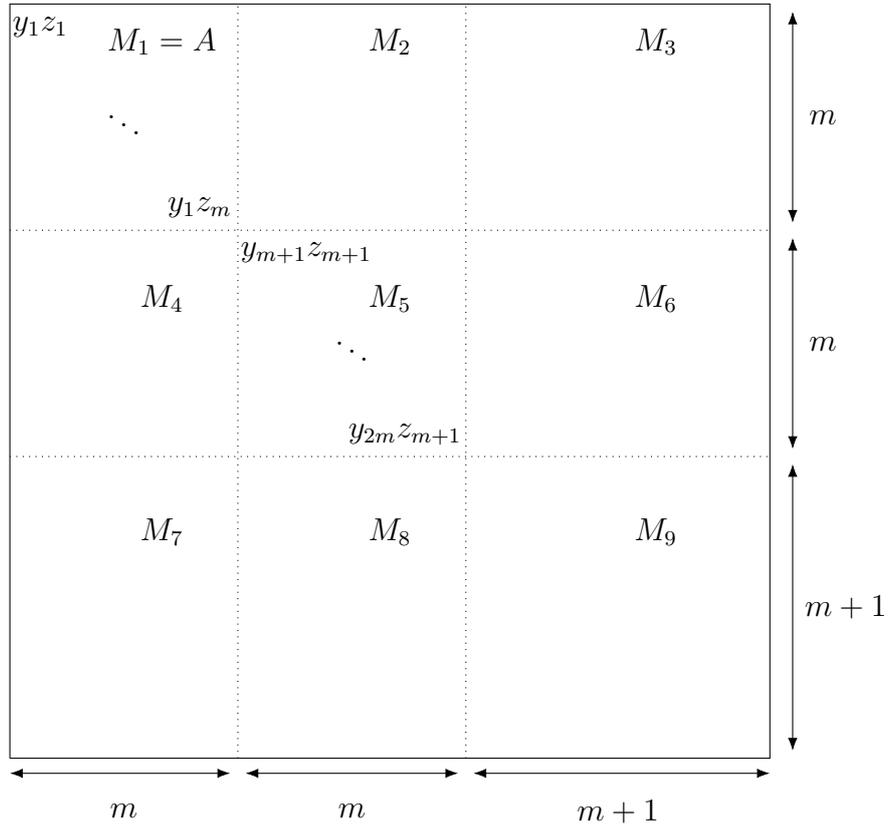

Relating Figure \ref{fig:1mod3} back to Figure \ref{fig:ABCD}, region $M_1$ is the old region $A$ with diagonal entries $y_1z_1,y_1z_2,\ldots,y_1z_m$, $M_2\cup M_3=B$, $M_4\cup M_7=C$, and $M_5 \cup M_6\cup M_8\cup M_9=D$. Region $M_5$ has diagonal entries $y_{m+1}z_{m+1},y_{m+2}z_{m+1},$ $ \ldots,y_{2m}z_{m+1}$.

Region $M_9$ must be full, otherwise it would be possible to insert $y_1z_{m+1}$ into an empty cell. So in $M$ there are at least $2m$ rows with $m$ entries and at least $m+1$ rows with $m+1$ entries, giving at least $2m^2+(m+1)^2=3m^2+2m+1$ entries. But this is the number of entries, so we must have exactly $2m$ rows with $m$ entries and $m+1$ rows with $m+1$ entries. By symmetry between rows, columns, first entries and second entries, the same applies to column frequencies, to first entry frequencies and to second entry frequencies. Since region $M_9$ covers $m+1$ rows and $m+1$ columns, it follows that regions $M_3,M_6,M_7$ and $M_8$ must all be empty.

Let $S$ denote the set of $m+1$ second entries occurring in column $2m+1$ of $M$, all of which occur in region $M_9$. Since $M$ is maximal, each row $m+i$ of $M$ for $1\le i\le m$ must contain all the second entries in $S'=\{z_{m+1}, z_{m+2},\ldots,z_{n}\}\setminus S$, otherwise there exists some $k$ with $m+1\le k \le n$, such that the entry pair $y_1z_k$ can be placed in cell $(m+i,2m+1)$. Consequently each element of $S'$ must occur in each of the rows $m+1,m+2,\ldots,2m$ of $M$.
Since $|S|=m+1$, we have $|S'|\ge m$, but because each row $m+i$ ($1\le i\le m$) has only $m$ entries, we must have $|S'|=m$, and it follows that $S$ cannot contain any of $z_1, z_2,\ldots,z_m$. Without loss of generality we may take $S=\{z_{2m+1},z_{2m+2}, \ldots, z_n\}$ and $S'=\{z_{m+1},z_{m+2},\ldots,z_{2m}\}$.

Next consider row $m+1$ of $M$: this contains all $m$ second entries in $S'$. By employing the same argument again it can be seen that each column $2m+i$ of $M$ for $1\le i \le m+1$ must contain all the second entries in $S$. It follows that all occurrences of each $z_k\in S$ lie in region $M_9$, so that $M_9$ forms a LS($m+1$) on the set $S$.

Region $M_5$ must be full, otherwise the entry pair $y_1z_n$ could be placed in an empty cell. It follows that regions $M_2$ and $M_4$ must be empty and that $M_5$ forms a LS($m$) on the set $S'$. This in turn implies that all occurrences of the second entries $z_1,z_2,\ldots, z_m$ are in region $M_1$ and hence that $M_1$ forms a LS($m$) on these second entries.

By repeating the argument of the previous three paragraphs with the roles of first and second entries reversed it can be seen that region $M_9$ forms a Latin square on first entries $y_{2m+1}, y_{2m+2}, \ldots,y_n$, region $M_5$ forms a Latin square on first entries $y_{m+1}, y_{m+2}, \ldots,y_{2m}$, and region $M_1$ forms a Latin square on first entries $y_{1}, y_{2}, \ldots,y_m$.  Consequently $M_1$ and $M_5$ are OLS($m$)s, and $M_9$ is an OLS($m+1$), the three subsystems are on different entry sets, and cells outside these regions are empty.

\newpage

\noindent\textbf{The case $n\equiv 2$ (mod 3)}

Suppose that $M$ is a minimum MOPLS($n$) with $n\equiv 2$ (mod 3) and $n\ge 23$. The minimum frequency   is $m=(n-2)/3$ and $F=(n^2+2)/3=3m^2+4m+2$.

Choose any $z_k$ with $k>m$ and consider the possibility of placing the entry $y_1z_k$ in one of the $2m+2$ cells of $T$. Since this must be impossible, it follows that each cell of $T$ must have a second entry $z_k$ in its row or column in $M$. An entry $z_k$ in regions $B$ or $C$ covers only one cell of $T$ and there are $2m+2$ cells of $T$ to be covered by occurrences of $z_k$. So there must be at least $m+1$ occurrences of $z_k$ with $k>m$, and this is only sufficient if all occurrences lie in region $D$ where each entry covers two cells of $T$. This implies that there are at least $m^2+(2m+2)(m+1)=3m^2+4m+2$ entries in $M$. But this is the number of entries, so we must have exactly $m+1$ occurrences of each $z_k$ with $k>m$, and each of these occurrences lies in region $D$. It follows that region $D$ must have $2(m+1)^2$ filled cells with each row and column within $D$ having $m+1$ filled cells. Consequently regions $B$ and $C$ have no filled cells, and region $A$ has the remaining $m^2$ filled cells. It then follows that all $m$ occurrences of $z_i$ for $i=1,2,\ldots,m$ must occur in region $A$ since $D$ is ``full'' with second entries $z_k$ for $k>m$. Hence $A$ forms a Latin square LS($m$) on the second entries $z_1, z_2, \ldots z_m$.

Next take the region $D$ and delete all first entries to form a PLS($2m+2$), say $D^*$, on the second entries $z_{m+1},z_{m+2},\ldots, z_n$. If it were possible to insert $z_i$ with $i>k$ into any vacant cell of $D^*$, then it would be possible to insert the entry pair $y_1z_i$ into the corresponding cell of $M$. Since this is impossible, we may conclude that $D^*$ is an MPLS($2m+2$). Since it has \linebreak $(2m+2)^2$ filled cells, by permuting rows and columns it can be put into the form described in Lemma \ref{lem:HR} and shown in Figure \ref{fig:HR}, with two LS($m+1$) subarrays on the leading diagonal on distinct point sets $\{z_{m+1},z_{m+2},\ldots, z_{2m+1}\}$ and $\{z_{2m+2},z_{2m+3},\ldots, z_{3m+2}\}$. The same permutations can be applied to region $D$ so that $D$ has two filled $(m+1)\times (m+1)$ subarrays on its leading diagonal, and other cells of $D$ are empty.

Swapping the roles of first and second coordinates in the argument above shows that region $A$ forms a LS($m$) on a first entry set, say $\{y_1, y_2, \ldots, y_m\}$. Consequently $A$ is an OLS($m$). The two $(m+1)\times (m+1)$ subarrays in region $D$ form LS($m+1$)s on first entry sets, say $\{y_{m+1},y_{m+2},\ldots,$ $ y_{2m+1}\}$ and $\{y_{2m+2},y_{2m+3},\ldots, y_{3m+2}\}$. Consequently $D$ has two OLS($m+1$)s on its leading diagonal and, outside the three OLSs, all other cells of $M$ are empty.

This completes the proof of Theorem \ref{th:subsquares}.
\end{proof}

\newpage

\noindent\textbf{Connection with coding theory}

As explained in \cite{MOPLS1}, there is a close connection between MOPLS($n$)s and $n$-ary codes of length 4 having minimum distance 3 and covering radius 2. If $M$ is an MOPLS($n$), then $M$ gives rise to an $n$-ary code $C$ of length 4 by taking as codewords the (row, column, first entry, second entry) quadruples corresponding to filled cells of $M$. Two codewords of $C$ can agree in at most one coordinate, so $C$ has minimum distance 3 or 4. But minimum distance 4 can only occur if each row and each column of $M$ has at most one entry pair, and each first entry symbol and each second entry symbol appears at most once in $M$. So for $n>3$, the minimum distance of the code $C$ will be 3.

Suppose that the code $C$ is derived from an MOPLS($n$), say $M$, as described. Since no empty cell of $M$ can be filled, every quadruple not in $C$ must lie within distance 2 of a codeword of $C$. In other words, the covering radius $\rho$ of $C$ is at most 2. For $n\ge 2$ we cannot have $\rho=0$ as this would imply that every possible quadruple is a codeword, giving $n^4\le n^2$. If $\rho=1$ then $C$ is a perfect code, but the only perfect code  of length 4 and minimum distance 3 is the ternary Hamming code Ham($2,3$) with 9 codewords, and this corresponds to an OLS(3). If $C$ is not a perfect code then $\rho=2$.

Hence, for $n>3$ an MOPLS($n$) is equivalent to an $n$-ary code of length 4 having minimum distance 3 and covering radius 2. And it follows from our Theorem \ref{th:main} and Corollary \ref{cor:main} that for $n\ge 21$ the minimum number of codewords in such a code is $\lceil n^2/3 \rceil$.

\section{Concluding Remarks}
This paper resolves the main conjecture made in \cite{MOPLS1}. However, a further conjecture was made in that paper concerning $k$-MOPLS. A set of $k$ partial Latin squares of the same order $n$, having the same filled cells as one another, are said to be \emph{orthogonal} if, when superimposed, the (row, column, entry 1, entry 2, $\ldots$, entry $k$) $(k+2)$-tuples agree in at most one coordinate position. We use the superimposed form and denote such an array as a $k$-OPLS($n$). Alternative representations are as a partial transversal design PTD($k+2,n$) or as a decomposition of a subgraph of $K_{(k+2)\times n}$ into copies of $K_{k+2}$.

A \emph{maximal} $k$-OPLS($n$) is a $k$-OPLS($n$) that cannot be extended to another $k$-OPLS($n$) by inserting any entry $k$-tuple into any empty cell. We denote a maximal $k$-OPLS($n$) as a $k$-MOPLS($n$).  Given $k$, for each sufficiently large $n$ there is a set of $k$ mutually orthogonal Latin squares of order $n$ (see \cite[page 163]{HBK}), and these form a maximal $k$-MOPLS of maximum size, i.e. having $n^2$ filled cells. We will denote such a design (in superimposed form) as a $k$-OLS($n$). We are concerned with the minimum size (number of filled cells) of any $k$-MOPLS($n$).

An example of a 3-MOPLS(16) is displayed in Figure \ref{fig:3-MOPLS(16)}. This has 64 filled cells and 192 empty cells. It is formed from four copies of a set of three mutually orthogonal Latin squares of order 4 (i.e. four 3-OLS(4)s), placed down the leading diagonal.

\begin{figure}[ht]
\begin{center}
\setlength\tabcolsep{1.5mm}
\tiny\begin{tabular}{|cccccccccccccccc|}\hline
                \rule{0mm}{3mm}111&222&333&444&-&-&-&-&-&-&-&-&-&-&-&-\\
                432&341&214&123&-&-&-&-&-&-&-&-&-&-&-&-\\
                243&134&421&312&-&-&-&-&-&-&-&-&-&-&-&-\\
                324&413&142&231&-&-&-&-&-&-&-&-&-&-&-&-\\
                -&-&-&-&555&666&777&888&-&-&-&-&-&-&-&-\\
                -&-&-&-&876&785&658&567&-&-&-&-&-&-&-&-\\
                -&-&-&-&687&578&865&756&-&-&-&-&-&-&-&-\\
                -&-&-&-&768&857&586&675&-&-&-&-&-&-&-&-\\
                -&-&-&-&-&-&-&-&999&AAA&BBB&CCC&-&-&-&-\\
                -&-&-&-&-&-&-&-&CBA&BC9&A9C&9AB&-&-&-&-\\
                -&-&-&-&-&-&-&-&ACB&9BC&CA9&B9A&-&-&-&-\\
                -&-&-&-&-&-&-&-&BAC&C9B&9CA&AB9&-&-&-&-\\
                -&-&-&-&-&-&-&-&-&-&-&-&DDD&EEE&FFF&GGG\\
                -&-&-&-&-&-&-&-&-&-&-&-&GFE&FGD&EDG&DEF\\
                -&-&-&-&-&-&-&-&-&-&-&-&EGF&DFG&GED&FDE\\
                -&-&-&-&-&-&-&-&-&-&-&-&FEG&GDF&DGE&EFD\\
                \hline\end{tabular}
                \caption{A 3-MOPLS(16)}\label{fig:3-MOPLS(16)}
                \end{center}
\end{figure}

It was shown in \cite{MOPLS1} that this is indeed a maximal 3-MOPLS(16).  And it was explained how the construction may be generalised to give a \linebreak $k$-MOPLS($n$) having approximately $n^2/(k+1)$ filled cells, provided that $n$ is sufficiently large. This is done by placing suitable $k$-OLS designs (all on different entry sets) down the leading diagonal of an otherwise empty $n\times n$ array. This led to the conjecture that this would give the minimum possible number of filled cells: $F\approx n^2/(k+1)$.

Again there is a connection with coding theory. If $M$ is a $k$-MOPLS($n$) then the code $C$ formed from the $(k+2)$-tuples has minimum distance\linebreak $k+1$ and covering radius $\rho\le k$. The code associated with the construction described briefly above has covering radius $\rho=k$.

\end{document}